\newtheorem{theorem}{Theorem}[section]
\newtheorem{lemma}[theorem]{Lemma}
\newtheorem{proposition}[theorem]{Proposition}
\theoremstyle{definition}
\newtheorem{definition}[theorem]{Definition}
\newtheorem{example}[theorem]{Example}
\theoremstyle{remark}
\newtheorem{remark}[theorem]{Remark}
\theoremstyle{definition}
\newtheorem{problem}{Problem}
\numberwithin{equation}{section}
\newcommand{\bp}{\mathsf{bp}}
\newcommand{\C}{\mathbb{C}}
\newcommand{\Cs}{\mathscr{C}}
\newcommand{\cb}{\mathbf{c}}
\newcommand{\co}{\coloneqq}
\newcommand{\db}{\mathbf{d}}
\newcommand{\de}{\mathrm{der}}
\newcommand{\I}{\mathrm{init}}
\newcommand{\kk}{\Bbbk}
\newcommand{\K}{\mathbb{K}}
\newcommand{\F}{\mathbb{F}}
\newcommand{\mv}{\mathrm{mvar}}
\newcommand{\N}{\mathbb{N}}
\newcommand{\p}{\mathsf{Projection}}
\newcommand{\Q}{\mathbb{Q}}
\newcommand{\Qc}{\mathcal{Q}}
\newcommand{\R}{\mathbb{R}}
\newcommand{\Rc}{\mathcal{R}}
\newcommand{\Rs}{\mathscr{R}}
\newcommand{\RT}{\mathsf{RealTriangularize}}
\newcommand{\re}{\mathrm{res}}
\newcommand{\s}{\mathrm{span}}
\newcommand{\sa}{\mathrm{sat}}
\newcommand{\Sf}{\mathfrak{S}}
\newcommand{\T}{\mathsf{Triangularize}}
\newcommand{\U}{\mathbf{u}}
\newcommand{\y}{\mathbf{y}}
\newcommand{\x}{\mathbf{x}}
\title[Testing isomorphism of complex and real Lie algebras]{Testing isomorphism of complex and real Lie algebras}
\author{Tuan A. Nguyen \and Vu A. Le \and Thieu N. Vo}
\address{Tuan A. Nguyen, Faculty of Political Science and Pedagogy, University of Physical Education and Sports, Ho Chi Minh City, Vietnam.}
\email{natuan@upes.edu.vn}
\address{Vu A. Le, Department of Economic Mathematics, University of Economics and Law, Vietnam National University - Ho Chi Minh City, Vietnam.}
\email{vula@uel.edu.vn}
\address{Thieu N. Vo, Fractional Calculus, Optimization and Algebra Research Group, Faculty of Mathematics and Statistics, Ton Duc Thang University, Ho Chi Minh City, Vietnam}
\email{vongocthieu@tdtu.edu.vn}
\keywords{Lie algebras, testing isomorphism, triangular decomposition, algorithms}
\subjclass[2010]{17B99, 14Q99, 68W30}
\begin{document}

\begin{abstract}
	In this paper, we give algorithms for determining the existence of isomorphism between two
	finite-dimensional Lie algebras and compute such an isomorphism in the affirrmative case. 
	We also provide algorithms for determining algebraic relations of parameters in order to decide 
	whether two parameterized Lie algebras are isomorphic. All of the considered Lie algebras are considred over 
	a field $\F$, where $\F=\C$ or $\F=\R$. 
	Several illustrative examples are given to show the applicability and the effectiveness of the proposed algorithms.
\end{abstract}

\maketitle

\section{Introduction}

In this paper, we will consider a computer-based approach for solving the following two problems:

\begin{problem}\label{Problem1}
Given two  $\F$-Lie algebras $L$ and $L'$ of same dimension. Deciding whether L and
L0 are isomorphic or not, and determine an isomorphism in the affirmative case.
\end{problem}

\begin{problem}\label{Problem2}
	Given two $\F$-Lie algebras $L(\cb)$ and $L'(\db)$ depending on $r$-tuple $\cb = (c_1, \dotsc, c_r) \in \F^r$ 
	and $s$-tuple $\db = (d_1, \dotsc, d_s) \in \F^s$, respectively. 
	Find conditions of parameters $\cb$ and $\db$ such that $L(\cb)$ and $L'(\db)$ are isomorphic.
\end{problem}

By definition, an \emph{$\F$-Lie algebra}, say $L$, is an $\F$-vector space endowed with a skew-symmetric bilinear map $[\cdot, \cdot] \colon L \times L \to L$ which obeys the Jacobi identity:
\[
	[[X, Y], Z] + [[Y, Z], X] + [[Z, X], Y] = 0, \quad \text{for all $X, Y, Z \in L$}.
\]
If $L$ is $n$-dimensional with a basis $\{X_1, \ldots, X_n\}$ then we have
\[
	[X_i, X_j] = \sum \limits_{k=1}^n a_{ij}^k X_k; \quad 1 \leq i < j \leq n.
\]
We call $a_{ij}^k \in \F$ the \emph{structure constants} of $L$.

An $\F$-linear isomorphism $\phi$ between two $\F$-Lie algebras $\left(L, [\cdot, \cdot]_L\right)$ and $\left(L, [\cdot, \cdot]_{L'}\right)$ is called an \emph{isomorphism} if it preserves Lie brackets, i.e.,
\begin{equation}\label{Lieisomorphism}
	\phi \left(\left[X, Y\right]_L\right) = \left[\phi(X), \phi(Y)\right]_{L'}, \quad \text{for all $X, Y \in L$}.
\end{equation}

The problem of classifying Lie algebras up to isomorphism is a fundamental problem of Lie Theory. 
Invariants (such as ideals in characteristic series, the nilradical, the center) 
contains partial informations of a Lie algebra. Invariants of isomorphic Lie algebras are the same.
However, it is still an open problem to determine a complete list of invariants such that they are strong enough to characterize a Lie algebra.
Therefore, it is impossible to decide the isomorphism between Lie algebras just by means of their invariants.

To the best of our knowledge, Gerdt and Lassner \cite{GL93} were the first authors considering Problem \ref{Problem1} 
under a view from computer algebra. In their algorithm, condition \eqref{Lieisomorphism} is transformed into a system of polynomial equations, therefore, the problem of testing Lie algebra isomorphism is reduced to the problem of testing the existence of a solution of a polynomial system.
Gr\"obner basis technique is then used to solve the latter problem.
However, since the complexity of computing Gr\"obner bases is very costly, the algorithm is impractical when the dimension pass 6.
Furthermore, it is not clear whether this algorithm is applicable for solving Problem \ref{Problem2}.

We provide new algorithms for solving Problems \ref{Problem1} and \ref{Problem2} in cases $\F = \C$ and $\R$.
Inherited from the idea by Gerdt and Lassner \cite{GL93}, we also rewrite the considered problems in terms of polynomial equations.
However, we will use the so-called \emph{triangular decomposition} instead of using Gr\"obner bases to deal with the polynomial systems.
There are two main advantages of using triangular decomposition. On the one hand, the algorithm for checking the existence 
of a solution of a polynomial system by using triangular decomposition runs much faster than that using Gr\"obner bases.
On the other hand, triangular decomposition can be used to deal with polynomial systems with parameters and over the real fields.
Details about the construction of triangular decomposition for polynomial systems with implementation in Maple were presented 
in \cite{Che11,CDMXX11,CDMMXX13,CGLMP07,CM12,CDMXX13}.

We recall necessary definitions and algorithms for triangular decomposition for polynomial systems over fields of characteristics zero 
in Sections \ref{sec2}, and for semi-algebraic system over the real field in Section \ref{sec3}.
Algorithms for the projection operator are recalled in Section \ref{sec4}.
In Section \ref{sec5}, we present algorithms for solving Problem \ref{Problem1} by using triangular decomposition.
In Section \ref{sec6}, we construct algorithms for solving Problem \ref{Problem2} by using triangular decomposition and projection.
Several illustrative examples are given in Section \ref{sec7} to show the applicability and effectiveness of the proposed algorithms.

\section{Triangular decomposition of polynomial systems}\label{sec2}

Here, we recall the basic ideas of Triangular decomposition of polynomial systems. More detailes on the theory and applications of triangular decomposition appear in \cite{Che11,CM12}.

In this section, $\kk$ is a field with algebraic closure $\K$. The notation $R \co \kk[\x]$ indicates the polynomial ring $\kk[x_1, \dotsc, x_n]$ with ordered variables $\x = x_1 < \dotsb < x_n$.
	
Let $p \in \kk[\x] \setminus \kk$. The greates variable of $p$ is called the \emph{main variable} of $p$ and denoted by $\mv(p)$. If $\mv(p) = x_i$ then we can consider $p$ as a univariate polynomial by $x_i$, i.e. $p = \kk[x_1,\ldots,x_{i-1}][x_i]$, and the greatest coefficient of $p$ is called the \emph{initial} of $p$ and denote it by $\I(p)$.

For $F \subset \kk[\x]$, we denote by $\left\langle F\right\rangle$ the ideal in $\kk[\x]$ spanned by $F$, and $V(F)$ the zero set (solution set or algebraic variety) of $F$ in $\K^n$.

Let $I \subset \kk[\x]$ be an ideal.
\begin{itemize}
	\item A polynomial $p \in \kk[\x]$ is called a \emph{zerodivisor modulo $I$} if there exists $q \in \kk[\x]$ such that $pq \in I$ and $p$ or $q$ belongs to $I$. If $p$ is neither 0 nor zerodivisor modulo $I$ then we call $p \in \kk[\x]$ is \emph{regular modulo $I$}
	\item For $h \in \kk[\x]$, the \emph{saturated ideal} of $I$ with respect to (hereafter, w.r.t.) $h$ is an ideal in $\kk[\x]$ as follows:
	\[
		I : h^\infty \co \left\lbrace q \in \kk[\x] \colon \exists m \in \N \text{ such that } h^mq \in I \right\rbrace.
	\]
\end{itemize}

A subset $T \subset \kk[\x] \setminus \kk$ consists of polynomials with pairwise distinct main variables is called a \emph{triangular set}. For triangular set $T \subset \kk[\x]$:
\begin{itemize}
	\item We denote by $\sa(T) \subset \kk[\x]$ the \emph{saturated ideal} of $T$ defined as follows: if $T = \emptyset$ then $\sa(T)$ is the trivial ideal $\{0\}$, otherwise it is the ideal $\left\langle T \right\rangle : h_T^\infty$.
		
	\item Let $p, q \in \kk[\x]$. If either $p$ or $q$ is not constant and has main variable $v$, then we define $\re(p, q, v)$ as the resultant of $p$ and $q$ w.r.t. $v$. We define $\re (p, T)$ inductively as follows: if $T = \emptyset$, then $\re(p,T) = p$; otherwise let $v$ be greatest variable appearing in $T$, then $\re(p,T) = \re \left(\re (p, T_v, v), T_{<v}\right)$.
		
	\item Let $h_T$ be the product of the initials of the polynomials in $T$. The \emph{quasi-component} of $T$ is $W(T) \co V(T) \setminus V(h_T)$. 
\end{itemize}

\begin{definition}[Regular chain]
	A triangular set $T \subset \kk[\x]$ is called a \emph{regular chain} if:
	\begin{enumerate}
		\item $T = \emptyset$; or
		\item $T \setminus \{T_{\max}\}$, where $T_{\max}$ is the polynomial in $T$ with maximum rank, is a regular chain and $\I \left(T_{\max}\right)$ is regular w.r.t. $\sa \left(T \setminus \{T_{\max}\}\right)$.
	\end{enumerate}
\end{definition}

\begin{definition}[Triangular decomposition]
	Let $F \subset \kk[\x]$ be finite. A finite subset $\{T_1, \ldots, T_e\}$ of regular chains of $\kk[\x]$ is called a \emph{triangular decomposition} of $V(F)$ if $V(F) = \bigcup_{i=1}^e W(T_i)$.
\end{definition}

\begin{remark}
	Chen and Maza \cite{CM12} presented the algorithm $\T(F)$ to compute a triangular decomposition of $V(F)$. It has been implemented in MAPLE.
\end{remark}
	
\begin{example}
	Consider the following system:
	\[
		\begin{cases}
			x^2 + y^2 + z^2 & = 4, \\ x^2 + 2y^2 & = 5, \\ xz & = 1.
		\end{cases}
	\]
	Set $F \co \left\lbrace x^2 + y^2 + z^2 - 4, \\ x^2 + 2y^2 - 5, \\ xz - 1\right\rbrace \subset R \co \R[x, y, z]$. Then, $\T(F, R)$ returns three regular chains as follows:
	\[
		\begin{array}{l}
			T_1 = \left\lbrace xz - 1, 2y^2 - 3, x^2 - 2 \right\rbrace, \\
			T_2 =\left\lbrace z + 1, y^2 - 2, x + 1 \right\rbrace, \\
			T_3 = \left\lbrace z-1, y^2-2, x-1 \right\rbrace.
		\end{array}
	\]
	The first regular chain has $h_{T_1} = x$, so $W(T_1) = V(T_1) \setminus V(x)$.  For two remaining ones, we have $W(T_2) = V(T_2)$ and $W(T_3) = V(T_3)$. Since $V(F) = \bigcup_{i=1}^3 W(T_i)$ (note that $V(F) \subset \C^3$), we need to solve three systems as follows:
	\[
		\begin{array}{l l l l l}
			\begin{cases}
				xz -1 & = 0 \\ 2y^2 - 3 & = 0 \\ x^2 - 2 & = 0 \\ x & \neq 0
			\end{cases}, &&
			\begin{cases}
				z & = -1 \\ y^2 & = 2 \\ x & = 1
			\end{cases}, &&
			\begin{cases}
				z & = 1 \\ y^2 & = 2 \\ x & = 1.
			\end{cases}
		\end{array}
	\]
\end{example}
	
\begin{example}
	Consider the following system:
	\[
		\begin{cases}
			x^2 + y + z & = 1, \\ x + y^2 + z & = 1, \\ x + y + z^2 & = 1.
		\end{cases}
	\]
	Due to Cox et al. \cite[Chapter 3, \S1]{CLO15}, the Gr\"obner basis of
	\[
		I = \left\langle x^2 + y + z - 1, x + y^2 + z - 1, x + y + z^2 - 1 \right\rangle
	\]
	w.r.t. lex order reduces to solve the following system:
	\[
		\begin{cases}
			x + y + z^2 - 1 & = 0 \\ y^2 - y - z^2 + z & = 0 \\ 2yz^2 + z^4 - z^2 & = 0 \\ z^6 - 4z^4 + 4z^3 - z^2 & = 0.
		\end{cases}
	\]
	Set $F \co \left\{x^2 + y + z - 1, x + y^2 + z - 1, x + y + z^2 - 1\right\} \subset R \co \R[x, y, z]$. $\T(F,R)$ reduces to solve four systems as follows:
	\[
		\begin{array}{l l l l l ll}
			\begin{cases}
				z - x &= 0 \\ y - x &= 0 \\ x^2 + 2x - 1 &= 0
			\end{cases}, &&
			\begin{cases}
				z &= 0 \\ y &= 0 \\ x - 1 &= 0
			\end{cases}, &&
			\begin{cases}
				z &= 0 \\ y - 1 &= 0 \\ x &= 0
			\end{cases}, &&
			\begin{cases}
				z - 1 &= 0 \\ y &= 0 \\ x &= 0.
			\end{cases}
		\end{array}
	\]
\end{example}
	
\begin{remark}
	A disadvantage of Gr\"obner bases is that they do not necessarily have a triangular set shape. Consequently, solving Gr\"obner bases to construct isomorphims is much harder, and in general, it seems to be impossible.
\end{remark}

\section{Triangular decomposition of semi-algebraic systems}\label{sec3}

In this section, we recall a little bit about triangular decomposition of semi-algebraic systems. For more details, we refer the readers to \cite{Che11,CDMMXX13,CDMXX13} and references therein.

In this section, $\kk$ is a field of characteristic 0 and $\K$ is its algebraic closure. For $p \in \kk[\x] \setminus \kk$, we denote by $\de(p)$ the derivative of $p$ w.r.t. $\mv(p)$.

Let $T \subset \kk[\x]$ be a triangular set. Denote by $\mv(T)$ the set of main variables of the polynomials in $T$. A variable $v \in \x$ is called \emph{algebraic} w.r.t. $T$ if $v \in \mv(T)$, otherwise it is said \emph{free} w.r.t. $T$. We shall denote by $\U = u_1, \dotsc, u_d$ and $\y = y_1, \dotsc, y_m$ respectively the free and the main variables of $T$. We let $d = 0$ whenever $T$ has no free variables.
	
Let $T \subset \kk[\x]$ be a regular chain and $H \subset \kk[\x]$. The pair $[T, H]$ is a \emph{regular system} if each polynomial in $H$ is regular modulo $\sa(T)$. If $H = \{h\}$ then we write $[T, h]$ for short. Regular chain $T$ or regular system $[T, H]$ is \emph{squarefree} if $\de(t)$ is regular w.r.t. $\sa(T)$ for all $t \in T$.
	
Let $[T, H]$ be a squarefree regular system of $\kk[\U, \y]$. Let $\bp$ be the primitive and square free part of the product of all $\re(\de(t), T)$ and all $\re(h, T)$ for $h \in H$ and $t \in T$. We call $\bp$ the \emph{border polynomial} of $[T, H]$.
	
Let us consider four finite subset of $\mathbb{Q}[x_1, \ldots, x_n]$ as follows:
\[
	\begin{array}{l l l l}
		F = \{f_1,\ldots, f_s\}, & N = \{n_1,\ldots, n_t\}, & P = \{p_1, \ldots, p_r\}, & H = \{h_1, \ldots, h_l\}. 
	\end{array}
\]
We denote by $N_\geq$ and $P_>$ the sets of inequalities $\{n_1 \geq 0, \ldots, n_t \geq 0\}$ and $\{p_1 > 0, \ldots, p_r >0\}$, respectively; and $H_{\neq}$ the set of inequations $\{h_1 \neq 0, \ldots, h_l \neq 0\}$.
	
\begin{definition}[Semi-algebraic systems]
	We denote by $\Sf \co [F, N_\geq, P_>, H_{\neq}]$ the \emph{semi-algebraic system} (SAS), that is the conjunction of the following conditions: $f_1 = \cdots = f_s = 0$, $N_\geq$, $P_>$ and $H_{\neq}$.
\end{definition}
	
\begin{definition}[Pre-regular semi-algebraic system]
	Let $[T, P]$ be a squarefree regular system of $\Q[\U, \y]$ with border polynomial $\bp$. Let $B \subset \Q[\U]$ be a polynomial set such that $\bp$ divides the product of polynomials in $B$. We call the triple $[B_{\neq}, T , P_>]$ a \emph{pre-regular semi-algebraic system} (PRSAS) of $\Q[\x]$. Zero set of $[B_{\neq}, T , P_>]$, denoted by $Z_\R (B_{\neq}, T, P_>)$, is the set $(u, y) \in \R^n$ such that $b(u) \neq 0$ for all $b \in B$, $t(u, y) = 0$ for all $t \in T$, and $p(u, y) > 0$ for all $p \in P$.
\end{definition}
	
\begin{lemma}[{\cite[Lemma 1]{CDMMXX13}}]\label{Lem1}
	Let $\Sf$ be a SAS of $\Q[\x]$. Then there exists finitely many PRSASs $[B_{i\neq}, T_i, P_{i>}]$, $i = 1, \ldots, e$, such that $Z_\R (\Sf) = {\mathop \bigcup \limits_{i=1}^e} Z_\R (B_{i\neq}, T_i, P_{i>})$.
\end{lemma}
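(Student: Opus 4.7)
The plan is to convert the SAS $\Sf = [F, N_\geq, P_>, H_\neq]$ into finitely many PRSASs through a sequence of standard operations from the triangular decomposition toolkit. First I eliminate the non-strict inequalities $N_\geq$ by case splitting: for each subset $S \subseteq \{1,\dotsc,t\}$ I take the branch in which $n_j = 0$ for $j \in S$ and $n_k > 0$ for $k \notin S$, absorbing the first group into the equality part and the second into $P_>$. Over the $2^t$ branches this partitions $Z_\R(\Sf)$ into a finite union of real zero sets of SASs of the form $[F', \emptyset, P', H_\neq]$, so I may assume $N = \emptyset$ from here on.

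Next I would apply the Triangularize algorithm of Section~\ref{sec2} to $F'$, yielding regular chains $T_1, \dotsc, T_e$ with $V(F') = \bigcup_i W(T_i)$. For each $T_i$ I combine the surviving conditions $P'_>$ and $H_\neq$ with $T_i$; the initials $h_{T_i}$ are appended to $H$ so that the quasi-component $W(T_i) = V(T_i) \setminus V(h_{T_i})$ is correctly described. Before computing a border polynomial I need (i)~$\de(t)$ to be regular modulo $\sa(T_i)$ for each $t \in T_i$, and (ii)~every $p \in P'$ and every $h \in H$ to be regular modulo $\sa(T_i)$. The classical Regularize and squarefree-splitting procedures achieve both by finitely refining $T_i$ into smaller regular chains; termination follows by noetherian induction on the triangular rank (see \cite{Che11,CDMMXX13}), and each step preserves the real zero set by construction, with recursion on the vanishing locus of any offending polynomial.

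Once each piece $[T, P']$ is a squarefree regular system with every $h \in H$ regular modulo $\sa(T)$, I compute its border polynomial $\bp \in \Q[\U]$, namely the primitive squarefree part of $\prod_{t \in T} \re(\de(t), T) \cdot \prod_{p \in P'} \re(p, T)$, and set
\[
    B \co \{\bp\} \cup \{\re(h, T) : h \in H\} \subset \Q[\U].
\]
Since $\bp$ divides the product of the elements of $B$, the triple $[B_{\neq}, T, P'_>]$ is a PRSAS in the sense of the definition. The condition $b(u) \neq 0$ for all $b \in B$ forces $\re(h, T)(u) \neq 0$ for every $h \in H$, and by the vanishing characterisation of the resultant this rules out any common zero of $h$ and $T$ in the fibre over $u$. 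Hence $h(u, y) \neq 0$ for every $(u, y) \in W(T)$ that lies above $\{b \neq 0 : b \in B\}$, so dropping $H_\neq$ from the description introduces no spurious points. Assembling the PRSASs produced in the two decomposition phases yields the required equality $Z_\R(\Sf) = \bigcup_{i=1}^e Z_\R(B_{i\neq}, T_i, P_{i>})$.

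The main obstacle, as I see it, is the bookkeeping in steps (i) and (ii) above: propagating the growing list of sign and non-vanishing polynomials through the repeated Regularize and squarefree splittings while guaranteeing that the resulting family still covers $Z_\R(\Sf)$. Termination is a standard rank-theoretic argument, but correctness of the cover requires that every split, which only handles the part of a quasi-component where the offending polynomial is regular modulo $\sa(T)$, be completed by a recursive triangularisation on the complementary locus where the polynomial vanishes identically modulo $\sa(T)$; this recursion is the technically delicate ingredient and is precisely where the algorithmic machinery of \cite{CDMMXX13} does the heavy lifting.
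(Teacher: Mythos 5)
First, a remark on the comparison itself: the paper does not prove this lemma at all --- it is quoted verbatim from \cite{CDMMXX13}*{Lemma 1} --- so your sketch can only be measured against the argument in that reference, which is essentially the machinery you invoke. Your first two phases are sound: splitting each $n\geq 0$ into the cases $n=0$ and $n>0$, running $\T$ on the resulting equation sets, refining by the Regularize/squarefree splittings, and absorbing the inequations $h\neq 0$ and the initials into $B$ via iterated resultants. Your observation that $b(u)\neq 0$ for all $b\in B$ excludes spurious points is also correct, since $\re(h,T)$ lies in the ideal generated by $T\cup\{h\}$, so its non-vanishing at $u$ forbids common real zeros of $h$ and $T$ in the fibre over $u$.

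The genuine gap is in the final assembling step: the PRSASs you produce do not satisfy the required \emph{equality}, only the inclusion $\bigcup_i Z_\R(B_{i\neq},T_i,P_{i>})\subseteq Z_\R(\Sf)$ together with coverage of the part of $Z_\R(\Sf)$ lying over the complement of $V\bigl(\prod_{b\in B_i}b\bigr)$. By definition, a point of $Z_\R(B_\neq,T,P_>)$ must satisfy $b(u)\neq 0$ for \emph{every} $b\in B$, so each of your pieces discards the whole portion of the solution set sitting above the border variety (where $\bp$ or some $\re(h,T)$ vanishes), and nothing in your construction recovers those points. Already for $\Sf=[\{x^2-u\},\emptyset,\emptyset,\emptyset]$ with $u$ free and $x$ main, your procedure outputs the single PRSAS $[\{u\}_{\neq},\{x^2-u\},\emptyset]$, whose zero set misses the real solution $(u,x)=(0,0)$ of $\Sf$; an exact decomposition needs a further PRSAS with regular chain $\{u,x\}$. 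The missing idea is the recursion on the border locus that carries the proof in \cite{CDMMXX13} and the \textsf{RealTriangularize} algorithm: for each produced $[B_\neq,T,P_>]$ and each $b\in B$, add $b=0$ to the equations of the system and reapply the entire procedure; since $b$ is regular modulo $\sa(T)$, the dimension strictly drops (a Noetherian/dimension argument), which yields termination and the exact cover. Your closing paragraph does mention recursion, but only the Regularize/squarefree recursion on loci where some polynomial fails to be regular modulo $\sa(T)$; the recursion actually needed here occurs even when every polynomial is regular, namely over the non-generic parameter values annihilating the border polynomial or the resultants $\re(h,T)$.
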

	
\begin{definition}[Regular semi-algebraic system]
	Let $T \subset \Q[\x]$ be a squarefree regular chain. Let $P \subset \Q[\x]$ be finite and such that each polynomial in $P$ is regular w.r.t. $\sa(T)$, i.e. $[T, P]$ is a regular system. Define $P_> \co \{p > 0 \,|\, p \in P\}$. Let $\Qc$ be a quantifier-free formula over $\Q[\x]$ involving only the $\U$ variables. Let $S = Z_\R(\Qc) \subset \R^d$  be the semi-algebraic subset of $\R^d$ defined by $\Qc$. When $d = 0$, the 0-ary Cartesian product $\R^d$ is treated as a singleton set. We say that $\Rc \co [\Qc, T , P_>]$ is a \emph{regular semi-algebraic system} (RSAS) if:
	\begin{enumerate}
		\item $S$ is a non-empty open subset in $\R^d$,
		\item The regular system $[T, P]$ specializes well at every $u \in S$,
		\item At each $u \in S$, specialized system $[T(u), P(u)_>]$ admits real solutions.
	\end{enumerate}
	The zero set of $\Rc$, denoted by $Z_\R(\Rc)$, is the set of points \fbox{$(u, y) \in \R^d \times \R^{n-d}$} such that $\Qc(u)$ holds, $t(u, y) = 0$ for all $t \in T$ and $p(u, y) > 0$ for all $p \in P$.
\end{definition}
	
\begin{lemma}[{\cite[Lemma 3]{CDMMXX13}}]\label{Lem3}
	Let $[B_{\neq}, T, P_>]$ be a PRSAS of $\Q[\U, \y]$. One can decide whether its zero set is empty or not. If it is not empty, then one can compute a RSAS $[\Qc, T , P_>]$ whose zero set is the same as that of $[B_{\neq}, T, P_>]$.
\end{lemma}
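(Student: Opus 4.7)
The plan is to reduce the problem to parametric real root counting over the parameter space $\R^d$ of $\U$-values. The key opening observation is that because $\bp$ divides $\prod_{b\in B} b$, the condition $b(u)\neq 0$ for all $b\in B$ already forces $\bp(u)\neq 0$. Since $\bp$ is (by definition) the primitive squarefree part of the product of the resultants $\re(\de(t),T)$ and $\re(h,T)$, this non-vanishing of the border polynomial is precisely what guarantees that $[T,P]$ specializes well at $u$. Hence condition (2) of a RSAS is automatic on the open set $U\co\{u\in\R^d : b(u)\neq 0\text{ for all }b\in B\}$, and only conditions (1) and (3) will need attention.

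Next, I would invoke the standard fact that on each connected component of $U$ the number of real solutions of the specialized system $\{T(u,\y)=0,\; P(u,\y)>0\}$ is constant, a consequence of the parametric real-root-counting theory for squarefree regular systems (Hermite quadratic forms along the triangular structure, together with subresultant sign analysis for the strict inequalities in $P_>$). Both the connected components of $U$ and this locally constant root count are definable by sign conditions on finitely many explicitly computable polynomials in $\Q[\U]$. Therefore I can decide, for each component $C$ of $U$, whether the specialized system has a real solution above $C$ by evaluating these sign conditions at a single sample point of $C$; and I can aggregate the answers into a single quantifier-free formula $\Qc$ over $\Q[\U]$ whose realization $S=Z_\R(\Qc)$ is exactly the union of those components on which the root count is positive. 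If no component qualifies, then $Z_\R(B_{\neq},T,P_>)$ is declared empty; otherwise, $S$ is a non-empty open subset of $\R^d$ (as a union of connected components of the open set $U$), $[T,P]$ specializes well on $S$ by the first paragraph, and $[T(u),P(u)_>]$ admits real solutions at every $u\in S$ by construction. Thus $[\Qc,T,P_>]$ satisfies all three defining properties of a RSAS.

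Equality of zero sets is then immediate: a point $(u,y)\in\R^d\times\R^{n-d}$ belongs to $Z_\R(B_{\neq},T,P_>)$ iff $u\in U$ and the fibre conditions $T(u,y)=0,\,P(u,y)>0$ hold, and $u$ survives into $S=Z_\R(\Qc)$ precisely when at least one such $y$ exists, so the two zero sets project onto the same subset of $\R^d$ with identical fibres. The main obstacle, in my view, is not the bookkeeping but the parametric real-root-counting step: effectively converting the quantified sentence ``$\exists\,\y\,(T(u,\y)=0\wedge P(u,\y)>0)$'' into a quantifier-free formula $\Qc$ in $\U$ alone. This is where one needs the heavier machinery — border polynomials plus Hermite/subresultant-based real root classification along the triangular chain, rather than general CAD — to keep the construction algorithmic and to ensure the output $\Qc$ is compatible with the $T,P_>$ that will accompany it in the RSAS.
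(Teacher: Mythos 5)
This lemma is not proved in the paper at all: it is quoted verbatim from \cite{CDMMXX13} (Lemma~3 there), and your sketch follows essentially the same route as that source — the divisibility of $\prod_{b\in B}b$ by the border polynomial guarantees good specialization off $V(B)$, the real-root count of the specialized system $[T(u),P(u)_>]$ is constant on connected components of the complement, and a quantifier-free formula $\Qc$ selecting the components with positive count is produced by border-polynomial/real-root-classification machinery rather than general CAD. So the proposal is correct in approach and consistent with the cited proof; no genuine gap beyond the (acknowledged) reliance on that parametric real-root-counting toolkit.
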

	
\begin{proposition}[{\cite[Theorem 2]{CDMMXX13}}]
	Let $\Sf$ be a SAS of $\Q[\x]$. Then one can compute a (full) triangular decomposition of $\Sf$, that is finitely many RSASs such that the union of their zero sets is the zero set of $\Sf$.
\end{proposition}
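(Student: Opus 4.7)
The plan is to derive the proposition as a direct corollary of Lemma \ref{Lem1} and Lemma \ref{Lem3} by chaining the two constructions: first decompose the SAS into PRSASs, then convert each non-empty PRSAS into an equivalent RSAS.

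More concretely, starting from the given SAS $\Sf$ over $\Q[\x]$, I would first invoke Lemma \ref{Lem1} to obtain finitely many PRSASs $[B_{i\neq}, T_i, P_{i>}]$, for $i = 1, \dotsc, e$, such that
\[
    Z_\R(\Sf) = \bigcup_{i=1}^{e} Z_\R(B_{i\neq}, T_i, P_{i>}).
\]
This reduces the problem to producing a triangular decomposition for each individual PRSAS in the list. Next, for each index $i$, I would apply Lemma \ref{Lem3} to $[B_{i\neq}, T_i, P_{i>}]$: the lemma provides a decision procedure to test whether $Z_\R(B_{i\neq}, T_i, P_{i>})$ is empty. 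If it is empty, I discard the $i$-th component, as it contributes nothing to the union. If it is non-empty, Lemma \ref{Lem3} supplies an RSAS $\Rc_i \co [\Qc_i, T_i, P_{i>}]$ with the same zero set.

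Collecting the surviving RSASs $\{\Rc_i\}$ yields a finite family of RSASs with
\[
    Z_\R(\Sf) = \bigcup_{i} Z_\R(\Rc_i),
\]
which is exactly the required full triangular decomposition. Since each step (applying Lemma \ref{Lem1}, testing emptiness, and producing the RSAS via Lemma \ref{Lem3}) is effective, the whole procedure is algorithmic, establishing the proposition.

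The only subtle point is ensuring that discarding the empty components is legitimate, which is immediate because empty sets contribute trivially to the union; there is no genuine obstacle beyond what is already packaged inside the two cited lemmas. Thus the main work has been offloaded to \cite{CDMMXX13}, and the proposition itself reduces to a short composition argument.
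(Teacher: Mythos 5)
Your argument is correct and follows exactly the route the paper (via its citation of \cite{CDMMXX13}) intends: Lemma \ref{Lem1} produces the covering family of PRSASs, and Lemma \ref{Lem3} tests each for emptiness and converts the non-empty ones into RSASs with identical zero sets, so the union is preserved. Nothing further is needed.
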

	
\begin{remark}
	Chen et al. \cite[Section 7]{CDMMXX13} presented an algorithm to compute a triangular decomposition of a semi-algebraic system $\Sf = [F, N_\geq, P_>, H_{\neq}]$ which was denoted by \textsf{RealTriangularize}$(\Sf)$. It has been implemented to MAPLE.
\end{remark}
	
\begin{example}
	Consider the following system:
	\[
		\begin{cases}
			x^2 + y^2 + z^2 + 2 = 0 \\ 3x^2 + 4y^2 + 4z^2 + 5 = 0.
		\end{cases}
	\]
	Put $F \co \{x^2 + y^2 + z^2 + 2, 3x^2 + 4y^2 + 4z^2 + 5\}$, $N_{\geq} = P_> = H_{\neq} \co \emptyset$. Set $\Sf \co \left[F, \emptyset, \emptyset, \emptyset\right]$ be a SAS in $R \co \R[x, y, z]$. Since $\RT(\Sf,R)$ returns $\emptyset$, the given system has no real root. Note that $\T(F,R)$ returns one regular chain $\left\lbrace z^2 + y^2 - 1, x^2 + 3 \right\rbrace$ which implies that the given system has complex roots.
\end{example}

\section{Projections}\label{sec4}
	
	For many purposes, we need to find solutions of a polynomial system $F$ as well as of a SAS $\Sf$ which consist of $r$ parameters and $n-r$ unknowns. In particular, we want to find out the values of parameters in which the given system admits solutions since it concerns directly with Problem \ref{Problem2}. In this situation, the projections are very useful (see \cite{CDMXX11,CGLMP07}).
	
	\subsection{Polynomial systems}
	
	Let $\kk$ be a field of characteristic zero with algebraic closure $\K$. 
	
	\begin{definition}[Constructible set]
		Let $F = \{f_1,\ldots, f_s\}$ and $H = \{h_1, \ldots, h_l\}$ be two finite set of $\kk[\x]$. The conjunction of $f_1 = \cdots = f_s = 0$ and $h_1 \neq 0, \ldots, h_l \neq 0$ is called a \emph{constructible system} of $\kk[\x]$, and denoted by $[F, H]$. Its zero set in $\K^n$, i.e., $V(F, H) \co V(F) \setminus V(H)$, is called a \emph{basic constructible set} of $\kk[x]$. A \emph{constructible set} of $\kk[\x]$ is a finite union of basic constructible sets of $\kk[\x]$.
	\end{definition}
	
	Now, we set $R \co \kk[\x,\U] = \kk[x_1 > \cdots > x_{n-r} > u_1 > \cdots > u_r]$ be the polynomial ring. For $F \subset R$, we consider $\x$ and $\U$ respectively as unknowns and parameters, that is, its $(x_1, \ldots, x_{n-r})$-solutions are multivariate functions of $(u_1, \ldots, u_r)$. In other words, $R \co \kk[\U][\x] = \kk[u_1 > \cdots > u_r][x_1 > \cdots > x_{n-r}]$. Denote by $\pi^r_\K \colon \K^n = \K^{n-r} \times \K^r \to \K^r$ the projection which maps $(\x, \U)$ in the entire space to $\U$ in the last $r$-dimensional parameter subspace. There are two situations as follows.
	\begin{itemize}
		\item For $F \subset R$, we want to find the image $\pi^r_\K(V(F))$ of its zero set $V(F) \subset \K^n$ under $\pi^r_\K$. 
		
		\item For $F, H \subset R$, we want to find the image $\pi^r_\K(V(F, H))$ of the constructible set $V(F, H) \subset \K^n$ under $\pi^r_\K$.
	\end{itemize}
	
	Note that these images are constructible sets of $\kk[\U]$. In MAPLE, the commands to find $\pi^r_\K(V(F))$ 
	and $\pi^r_\K(V(F, H))$ are $\p(F, r, R)$ and $\p(F, H, r, R)$, respectively.
	
	\begin{example}
		Consider $F \co \{x^2+y^2-1\}$ and $H \co \{x+y-1\}$ in $R \co \R[y, x] \equiv \R[x][y]$, i.e., $y$ and $x$ are respectively the unknown and the parameter. We want to find the values of $x$ such the system $[F, H]$ admits complex roots. To this end, we need to find $\pi^1_\C(V(F, H))$. $\p (F, H, 1, R)$ returns a constructible set of $\R[x]$ as follows:
		\[
			\begin{array}{l l}
				\begin{cases}
					x & \neq 0 \\ x-1 & \neq 0
				\end{cases}, & x = 0.
			\end{array}
		\]
		Thus, the system $[F, H]$ admits complex roots iff $x \neq 1$.
	\end{example}

	\subsection{Semi-algebraic systems}
	
	We set $R \co \R[\x, \U] = \R[x_1 > \cdots > x_{n-r} > u_1 > \cdots > u_r]$ be the polynomial ring, where $\x$ are unknowns and $\U$ are parameters. Denote by $\pi^r_\R \colon \R^n = \R^{n-r} \times \R^r \to \R^r$ the projection which maps $(\x, \U)$ in the entire space to $\U$ in the last $r$-dimensional parameter subspace. For a SAS $\Sf = [F, N, P, H]$ of $R$, we want to find the image $\pi^r_\R(Z_\R(\Sf))$ of its zero set $Z_\R(\Sf) \subset \R^n$ under $\pi^r_\R$.
		
	In this case, the image $\pi^r_\R(Z_\R(\Sf))$ is RSASs of $\R[\U]$ which can be found by the command $\p(F, N, P,  H, r, R)$ in MAPLE.
	
	\begin{example}
		Consider the following quadratic equation:
		\[
			ax^2 + bx + c = 0; \quad a,b,c \in \R, a \neq 0.
		\]
		In this example, we reexamine the well-known result that this quadratic equation admits real roots iff its discriminant $\Delta \co b^2-4ac$ is non-negative. First, we put $F \co \left\lbrace ax^2 + bx+c\right\rbrace$, $N = P \co \emptyset$ and $H \co \{a\}$. Afterwards, we set $\Sf \co [F, N, P, H]$ be a RSAS in $R \co \R[a, b, c][x]$. Then, $\p (F, N, P, H, 3, R)$ returns three RSASs of $\R[a, b, c]$ as follows:
		\[
			\begin{array}{l l l l l}
				\begin{cases}
					b = 0 \\ c = 0 \\ a \neq 0
				\end{cases}, &&
				\begin{cases}
					4ac - b^2 = 0 \\ b \neq 0 \textbf{ and } c \neq 0
				\end{cases}, &&
				b^2 - 4ac > 0 \textbf{ and } a \neq 0.
			\end{array}
		\]
		It is obvious that these results can be sum up by $\Delta \geq 0$.
	\end{example}

\section{Algorithms for Problem \ref{Problem1}}\label{sec5}

Let $L = \s \{X_1, \ldots, X_n\}$ and $L' = \s \{Y_1, \ldots, Y_n\}$ be two $n$-dimensional $\F$-Lie algebras with structure constants $a_{ij}^k$ and $b_{ij}^k$, respectively. An isomorphism $\phi \colon L \to L'$ must satisfy:
\begin{itemize}
	\item $\phi \left(\left[X_i, X_j\right]\right) = \left[\phi(X_i), \phi(X_j)\right]$ for $1 \leq i < j \leq n$,
	\item $\det \left[\phi\right] \neq 0$, where $\left[\phi\right]$ is the matrix of $\phi$.
\end{itemize}
	
Assume that the matrix $\left[\phi\right]$ of $\phi$ is as follows:
\[
	[\phi] =
	\begin{bmatrix}
		z_{11} & \dotsb & z_{n1} \\
		\vdots & \ddots & \vdots \\
		z_{1n}  & \dotsb & z_{nn}
	\end{bmatrix}.
\]
The condition $\phi \left(\left[X_i, X_j\right]\right) = \left[\phi(X_i), \phi(X_j)\right]$ is equivalent to
\[
	\begin{array}{l l}
		\sum \limits_{k=1}^n z_{ks}a_{ij}^k - \sum \limits_{k,l = 1}^n z_{ik}z_{jl}b_{kl}^s = 0, & 1 \leq i < j \leq n, s = 1, \dotsc, n.
	\end{array}
\]
The condition $\det \left[\phi\right] \neq 0$ is equivalent to $1 - z \det [\phi] = 0$, where $z \in \F$ is a new unknown. 
Therefore, the isomorphic conditions to the following system of equations
	\begin{equation}\label{sys-problem1}
		\begin{cases}
			\sum \limits_{k=1}^n z_{ks}a_{ij}^k - \sum \limits_{k,l = 1}^n z_{ik}z_{jl}b_{kl}^s = 0, & 1 \leq i < j \leq n, s = 1, \dotsc, n,\\
			1 - z \det [\phi] = 0,
		\end{cases}
	\end{equation}
	which consists of $\leq n\binom{n}{2} +1$ polynomials in $\F[z, z_{11}, \ldots, z_{1n}, \ldots, z_{n1}, \ldots, z_{nn}]$ of degree $\leq n+1$ with $n^2+1$ unknowns	$z, z_{ij} \in \F$. Hence, we have
	
	\begin{theorem}
		$L$ and $L'$ are isomorphic iff the zero set of \eqref{sys-problem1} is non-empty.
	\end{theorem}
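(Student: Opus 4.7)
The plan is to prove the two implications directly by unwinding the definitions, so the proof is essentially a verification that the polynomial system \eqref{sys-problem1} is an exact encoding of the isomorphism conditions stated just above it.

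For the forward direction, I would suppose $\phi \colon L \to L'$ is an isomorphism and let $[\phi] = (z_{ij})$ be its matrix in the given bases, so that $\phi(X_i) = \sum_k z_{ik} Y_k$. Expanding $\phi([X_i,X_j])$ using the structure constants of $L$ and $[\phi(X_i),\phi(X_j)]$ using the structure constants of $L'$ gives, for each pair $i<j$ and each $s$, exactly the vanishing of $\sum_k z_{ks} a_{ij}^k - \sum_{k,l} z_{ik} z_{jl} b_{kl}^s$. Since $\phi$ is an isomorphism, $\det[\phi] \neq 0$, so setting $z \co 1/\det[\phi]$ produces a solution of the last equation $1 - z \det[\phi] = 0$. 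This yields a point in the zero set of \eqref{sys-problem1}.

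For the converse, I would start with a solution $(z,(z_{ij}))$ of \eqref{sys-problem1} and define an $\F$-linear map $\phi \colon L \to L'$ by $\phi(X_i) \co \sum_k z_{ik} Y_k$. The equation $1 - z\det[\phi] = 0$ forces $\det[\phi] \neq 0$ (otherwise $z \cdot 0 = 1$, which is impossible), so $\phi$ is an $\F$-linear isomorphism of vector spaces. The bracket-preservation equations in \eqref{sys-problem1} give exactly $\phi([X_i,X_j]) = [\phi(X_i),\phi(X_j)]$ for $1\le i<j\le n$, and this suffices to recover \eqref{Lieisomorphism} on all of $L\times L$ by bilinearity of the bracket and skew-symmetry (which handles $i=j$ and $i>j$). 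Hence $\phi$ is a Lie algebra isomorphism.

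There is no real obstacle here: the substance is in setting up the encoding, which was already done just before the statement. The only thing worth being careful about is the use of the Rabinowitsch-style auxiliary variable $z$: one must observe that $1-z\det[\phi]=0$ is solvable in $\F$ for some $z$ if and only if $\det[\phi]\neq 0$, which is what turns the open condition ``$\det[\phi]\neq 0$'' into a polynomial equation suitable for triangular decomposition. Once this bijection between isomorphisms and solutions of \eqref{sys-problem1} is established, the equivalence with non-emptiness of the zero set is immediate.
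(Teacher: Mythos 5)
Your proof is correct and follows essentially the same route as the paper: the paper derives \eqref{sys-problem1} precisely as the coefficient-wise encoding of $\phi([X_i,X_j])=[\phi(X_i),\phi(X_j)]$ on basis pairs together with the Rabinowitsch-style equation $1-z\det[\phi]=0$ replacing $\det[\phi]\neq 0$, and then states the theorem as an immediate consequence, which is exactly the bijection between isomorphisms and solutions you verify in both directions.
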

	
	Since \eqref{sys-problem1} is a polynomial system, we can use triangular decompositions. To this end, we first set the polynomial ring $R \co \F[z,  z_{11}, \ldots, z_{1n}, \ldots, z_{n1}, \ldots,  z_{nn}]$ and $F$ be all of polynomials on the left-hand side of \eqref{sys-problem1}. Afterwards, determining whether the zero set of \eqref{sys-problem1} is empty or not is based on the triangular decomposition. If $\F = \C$ then we find a triangular decomposition of $V(F) \subset \C^{n^2+1}$. If $\F = \R$ then we find a triangular decomposition of $Z_\R(F, \emptyset, \emptyset, \emptyset) \subset \R^{n^2+1}$. These procedures are given in Algorithms \ref{Alg1} and \ref{Alg2}.
	
	\begin{algorithm}[h]
		\KwIn{Structure constants $a_{ij}^k \in \C$ of $L$ and $b_{ij}^k \in \C$ of $L'$}
		\KwOut{Yes ($L \cong L'$) or No ($L \ncong L'$)}
			$R \co \C[z, z_{11}, \ldots, z_{1n}, \ldots, z_{n1}, \ldots, z_{nn}]$\;
			$F \co \left\lbrace \text{Polynomials determining equations of \eqref{sys-problem1}}\right\rbrace$\;
			$V \co$ Triangular decomposition of $V(F)$; \qquad \quad /$\mathsf{Triangularize}(F, R)$/\\
			\eIf{$V = \emptyset$}{ouput No}{output Yes}
		\caption{Testing isomorphism of complex Lie algebras}\label{Alg1}
	\end{algorithm}
		
	\begin{algorithm}[h]
		\KwIn{Structure constants $a_{ij}^k \in \R$ of $L$ and $b_{ij}^k \in \R$ of $L'$}
		\KwOut{Yes ($L \cong L'$) or No ($L \ncong L'$)}
		$R \co \R[z, z_{11}, \ldots, z_{1n}, \ldots, z_{n1}, \ldots, z_{nn}]$\;
		$F \co \left\lbrace \text{Polynomials determining equations of \eqref{sys-problem1}}\right\rbrace$\;
		$\Sf \co \left[F, \emptyset, \emptyset, \emptyset\right]$\;
		$Z \co$ Triangular decomposition of $Z_\R(\Sf)$; \qquad /$\mathsf{RealTriangularize}(\Sf, R)$/ \\
		\eIf{$Z = \emptyset$}{ouput No}{output Yes}
		\caption{Testing isomorphism of real Lie algebras}\label{Alg2}
	\end{algorithm}

\section{Algorithms for Problem \ref{Problem2}}\label{sec6}

	Given two $n$-dimensional parametric $\F$-Lie algebras $L(\cb) = \s \{X_1, \ldots, X_n\}$ and $L'(\db) = \s \{Y_1, \ldots, Y_n\}$ whose structure constants are $a_{ij}^k \in \F[\cb]$ and $b_{ij}^k \in \F[\db]$, respectively. Note that two tuples of parameters $\cb$ and $\db$ may satisfy some additional conditions. Our objective is to determine the values of $\cb$ and $\db$ such that $L(\cb) \cong L'(\db)$. We divide into two cases as follows.
	
	\begin{enumerate}[\bf A.]
		\item \fbox{$\F= \C$}. Assume that two tuples $\cb, \db$ satisfy additionally
		\[
			\begin{array}{l l}
				\cb \in \Cs_0 = V(F_0, H_0) \subset \C^r; & F_0, H_0 \in \C[\cb], \\
				\db \in \Cs_1 = V(F_1, H_1) \subset \C^s; & F_1, H_1 \in \C[\db].
			\end{array}
		\]
		Assume that $\phi \colon L \to L'$ is an isomorphism with matrix:
		\[
			[\phi] =
			\begin{bmatrix}
				z_{11} & \dotsb & z_{n1} \\
				\vdots & \ddots & \vdots \\
				z_{1n}  & \dotsb & z_{nn}
			\end{bmatrix}.
		\]
		Since $\cb \in \Cs_0$ and  $\db \in \Cs_1$, the isomorphic conditions is equivalent to
		\begin{equation}\label{sys-problem2}
			\begin{cases}
				\sum \limits_{k=1}^n z_{ks}a_{ij}^k - \sum \limits_{k,l = 1}^n z_{ik}z_{jl}b_{kl}^s = 0, & 1 \leq i < j \leq n, s = 1, \dotsc, n,\\
				1 - z \det [\phi] = 0,\\
				f = 0, & \text{for all $f \in F_0 \cup F_1$}, \\
				h \neq 0, & \text{for all $h \in H_0 \cup H_1$},
			\end{cases}
		\end{equation}
		which consists of $\leq n\binom{n}{2} +1 + |F_0| + |F_1| + |H_0| + |H_1|$ polynomials in
		\[
			\C[z, z_{11}, \ldots, z_{1n}, \ldots, z_{n1}, \ldots, z_{nn}, \cb, \db]
		\]
		of degree $\leq \max \left\lbrace n+1, \max \limits_{f \in F_0 \cup F_1} \deg f, \max \limits_{h \in H_0 \cup H_1} \deg h \right\rbrace$ with $n^2+1$ unknowns $z, z_{ij}$ and $r+s$ parameters $\cb, \db$. Set $F \co \left\lbrace \text{Polynomials determining equations of \eqref{sys-problem2}} \right\rbrace$ and $H \co H_0 \cup H_1$. Then, we have that:	
		
		\begin{theorem}
			$L(\cb)$ and $L'(\db)$ are isomorphic iff $(\cb, \db) \in \pi^{r+s}_\C (V(F, H))$,
			where $\pi_{\mathbb{C}}^{r+s}$ is the projection to the $(\cb,\db)$-parameters space $\C^{r+s}$.
		\end{theorem}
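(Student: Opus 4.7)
The plan is to establish both implications by directly unpacking the construction of system \eqref{sys-problem2}. The whole setup is engineered so that the existence of a Lie isomorphism between $L(\cb)$ and $L'(\db)$ is equivalent to the existence of a solution of a polynomial system in the unknowns $z, z_{ij}$ with the parameters $\cb, \db$ fixed; the projection $\pi^{r+s}_\C$ on the last $r+s$ coordinates then precisely records which pairs $(\cb, \db)$ admit such a solution. So the theorem reduces to two routine translations plus the fiber-wise interpretation of $\pi^{r+s}_\C$.

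For the forward direction, I would suppose $\phi \colon L(\cb) \to L'(\db)$ is an isomorphism with matrix $[\phi] = (z_{ij})$. Since $\phi$ is invertible, $\det[\phi] \neq 0$, and setting $z \co 1/\det[\phi]$ produces $1 - z\det[\phi] = 0$. Expanding $\phi([X_i, X_j]) = [\phi(X_i), \phi(X_j)]$ in the basis $\{Y_k\}$ using the structure constants $a_{ij}^k(\cb)$ and $b_{ij}^k(\db)$ recovers the first batch of equations in \eqref{sys-problem2}. The hypotheses $\cb \in \Cs_0 = V(F_0, H_0)$ and $\db \in \Cs_1 = V(F_1, H_1)$ translate directly to $f = 0$ for $f \in F_0 \cup F_1$ and $h \neq 0$ for $h \in H_0 \cup H_1$. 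Hence the tuple $(z, z_{ij}, \cb, \db)$ lies in $V(F, H)$, so $(\cb, \db) \in \pi^{r+s}_\C(V(F, H))$.

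For the converse, suppose $(\cb, \db) \in \pi^{r+s}_\C(V(F, H))$. By the definition of the projection there exist $z, z_{ij} \in \C$ such that $(z, z_{ij}, \cb, \db) \in V(F, H)$. I define the linear map $\phi \colon L(\cb) \to L'(\db)$ by $\phi(X_i) \co \sum_{j=1}^n z_{ji} Y_j$, matching the matrix convention used in Section \ref{sec5}. The relation $1 - z\det[\phi] = 0$ forces $\det[\phi] \neq 0$, so $\phi$ is a linear isomorphism. The remaining polynomial equations in $F$ are exactly the coordinate equations of $\phi([X_i, X_j]) - [\phi(X_i), \phi(X_j)] = 0$ for $1 \leq i < j \leq n$; by bilinearity and skew-symmetry this extends to all pairs, so $\phi$ preserves Lie brackets. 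The equations and inequations coming from $F_0, F_1, H_0, H_1$ further ensure $\cb \in \Cs_0$ and $\db \in \Cs_1$, so $\phi$ is a genuine Lie isomorphism between the specified parametric algebras.

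The only real subtlety is bookkeeping: the structure constants $a_{ij}^k(\cb)$ and $b_{ij}^k(\db)$ depend on parameters, so the polynomials in $F$ live in $\C[z, z_{11}, \dotsc, z_{nn}, \cb, \db]$ and must be treated with all variables simultaneously when forming $V(F, H) \subset \C^{n^2+1+r+s}$ before projecting. No genuine algebraic obstacle arises beyond checking that the encoding of $\Cs_0$ and $\Cs_1$ as basic constructible sets is preserved throughout, which is immediate from the definitions.
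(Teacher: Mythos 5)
Your proof is correct and takes essentially the same route as the paper, whose own proof is just the chain of equivalences: $L(\cb)\cong L'(\db)$ $\Leftrightarrow$ system \eqref{sys-problem2} admits a complex root $(z,z_{ij},\cb,\db)$ $\Leftrightarrow$ $(z,z_{ij},\cb,\db)\in V(F,H)$ $\Leftrightarrow$ $(\cb,\db)\in \pi^{r+s}_\C\left(V(F,H)\right)$; you merely spell out the two translations and the fiber-wise meaning of the projection. One cosmetic point: with the paper's matrix convention the reconstructed map should be $\phi(X_i)=\sum_{j} z_{ij}Y_j$ (the coefficient of $Y_s$ in $\phi(X_i)$ is $z_{is}$), not $\sum_j z_{ji}Y_j$, but this transposition does not affect the argument.
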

		
		\begin{proof}
			As we have seen above
			\[
				\begin{array}{l l l}
					L'(\cb) \cong L'(\db) & \Leftrightarrow & \text{\eqref{sys-problem2} admits complex roots $(z, z_{ij}, \cb, \db)$} \\
					& \Leftrightarrow & (z, z_{ij}, \cb, \db) \in V(F, H) \\
					& \Leftrightarrow & (\cb, \db) \in \pi^{r+s}_\C (V(F, H)).
				\end{array}
			\]
		\end{proof}
		We can sum up the procedure by Algorithm \ref{Alg3}.
		\begin{algorithm}[h]
			\KwIn{Structure constants $a_{ij}^k \in \C[\cb]$ of $L(\cb)$, $b_{ij}^k \in \C[\db]$ of $L'(\db)$ 
				where $\cb \in \Cs_0 \subset \C^r$, $\db \in \Cs_1 \subset \C^s$ and $\Cs_i = V(F_i, H_i)$}
			\KwOut{$\sqcup_i A_i \subset \Cs \co V(F_0 \cup F_1, H_0 \cup H_1) \subset \C^{r+s}$ s.t. $L(\cb) \cong L(\db)$ iff $\cb, \db \in A_i$ for some $i$}
				$R \co \C[z,z_{11}, \ldots, z_{1n}, \ldots, z_{n1}, \ldots, z_{nn},\cb, \db]$\;
				$F \co \left\lbrace \text{Polynomials determining equations of \eqref{sys-problem2}} \right\rbrace$\;
				$H \co H_0 \cup H_1$\;
				$\pi_\C^{r+s} \left(V(F, H) \right)$; \qquad /$\p (F, H, r+s, R)$/
			\caption{Testing isomorphism of parametric complex Lie algebras}\label{Alg3}
		\end{algorithm}
		
		\item \fbox{$\F = \R$}. Assume that two tuples $\cb, \db$ satisfy additionally
		\[
			\begin{array}{l l}
				\cb \in \Cs_0 = Z_\R(F_0, N_0, P_0, H_0) \subset \R^r; & F_0, N_0, P_0, H_0 \in \R[\cb], \\
				\db \in \Cs_1 = Z_\R(F_1, N_1, P_1, H_1) \subset \R^s; & F_1, N_1, P_1, H_1 \in \R[\db].
			\end{array}
		\]
		Similarly, an isomorphism $\phi \colon L \to L'$ with matrix
		\[
			[\phi] =
			\begin{bmatrix}
				z_{11} & \dotsb & z_{n1} \\
				\vdots & \ddots & \vdots \\
				z_{1n}  & \dotsb & z_{nn}
			\end{bmatrix}.
		\]
		is equivalent to
		\begin{equation}\label{sys-problem2.1}
			\begin{cases}
				\sum \limits_{k=1}^n z_{ks}a_{ij}^k - \sum \limits_{k,l = 1}^n z_{ik}z_{jl}b_{kl}^s = 0, & 1 \leq i < j \leq n, s = 1, \dotsc, n,\\
				1 - z \det [\phi] = 0,\\
				f = 0, & \text{for all $f \in F_0 \cup F_1$}, \\
				n \geq 0, & \text{for all $n \in N_0 \cup N_1$}, \\
				p > 0, & \text{for all $p \in P_0 \cup P_1$}, \\
				h \neq 0, & \text{for all $h \in H_0 \cup H_1$},
			\end{cases}
		\end{equation}
		which consists of $\leq n\binom{n}{2} +1 + |F_0| + |F_1| + |N_0| + |N_1| +  |P_0| + |P_1| + |H_0| + |H_1|$ polynomials in
		$\R[z, z_{11}, \ldots, z_{1n}, \ldots, z_{n1}, \ldots, z_{nn}, \cb, \db]$ of degree
		\[
			\leq \max \left\lbrace n+1, \max \limits_{f \in F_0 \cup F_1} \deg f, \max \limits_{n \in N_0 \cup N_1} \deg n, \max \limits_{p \in P_0 \cup P_1} \deg p, \max \limits_{h \in H_0 \cup H_1} \deg h \right\rbrace
		\]
		with $n^2+1$ unknowns $z, z_{ij}$ and $r+s$ parameters $\cb, \db$. Put
		\[
			\begin{cases}
				F \co \left\lbrace \text{Polynomials determining equations of \eqref{sys-problem2.1}} \right\rbrace \\
				N \co N_0 \cup N_1, \\
				P \co P_0 \cup P_1, \\
				H \co H_0 \cup H_1,
			\end{cases}
		\]
		and set $\Sf \co [F, N, P, H]$ be a SAS of $\R[z, z_{11}, \ldots, z_{1n}, \ldots, z_{n1}, \ldots, z_{nn}, \cb, \db]$. Then, we have that:	
		
		\begin{theorem}
			$L(\cb)$ and $L'(\db)$ are isomorphic iff $(\cb, \db) \in \pi^{r+s}_\R (Z_\R(\Sf))$,
			where $\pi^{r+s}_\R$ is the projection to the $(\cb,\db)$-parameters space $\R^{r+s}$.
		\end{theorem}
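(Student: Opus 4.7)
The plan is to mirror the complex case proved just above, replacing the complex variety $V(F,H)$ by the real zero set $Z_\R(\Sf)$ of the semi-algebraic system $\Sf = [F,N,P,H]$, and replacing the projection $\pi^{r+s}_\C$ by its real counterpart $\pi^{r+s}_\R$. The strategy is to present the statement as a chain of logical equivalences that unpacks the isomorphism in terms of the system \eqref{sys-problem2.1}.

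First I would argue that $L(\cb) \cong L'(\db)$ if and only if there exists a matrix $[\phi]$ of the prescribed form, together with an auxiliary real value $z$, such that the bracket-preservation equations and the nondegeneracy condition $1 - z \det[\phi] = 0$ both hold. This is exactly the step performed in the non-parametric real case (Algorithm \ref{Alg2}): the polynomial equations encode $\phi([X_i,X_j]_L) = [\phi(X_i),\phi(X_j)]_{L'}$, and the introduction of the slack variable $z$ converts $\det[\phi]\neq 0$ into a polynomial equation, so the question of existence of an isomorphism becomes the question of existence of a real solution of these polynomial equations.

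Next I would fold in the parameter constraints. The hypotheses $\cb \in \Cs_0 = Z_\R(F_0,N_0,P_0,H_0)$ and $\db \in \Cs_1 = Z_\R(F_1,N_1,P_1,H_1)$ are themselves semi-algebraic conditions on $(\cb,\db)$, so they can be appended to the isomorphism system. The conjunction of these conditions with the isomorphism equations is precisely the semi-algebraic system \eqref{sys-problem2.1}, whose zero set in $\R^{n^2+1+r+s}$ is $Z_\R(\Sf)$. Thus $L(\cb)\cong L'(\db)$ with admissible parameters is equivalent to the existence of $(z,z_{ij})\in\R^{n^2+1}$ with $(z,z_{ij},\cb,\db)\in Z_\R(\Sf)$.

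Finally, the statement $\exists (z,z_{ij})\in\R^{n^2+1}:(z,z_{ij},\cb,\db)\in Z_\R(\Sf)$ is by definition $(\cb,\db) \in \pi^{r+s}_\R(Z_\R(\Sf))$, completing the equivalence. There is no real technical obstacle here, as the theorem is a direct translation of the setup into the language of semi-algebraic projections; the substantive content sits in Lemma \ref{Lem3} and the \RT/\p\ machinery of Section \ref{sec3} and Section \ref{sec4}, which guarantee that $\pi^{r+s}_\R(Z_\R(\Sf))$ is effectively computable. The only point requiring a word of care is that the slack variable $z$ is introduced over $\R$ rather than $\C$, but since $\det[\phi]\neq 0$ with $[\phi]$ real admits a real inverse $z = 1/\det[\phi]$, this substitution is harmless.
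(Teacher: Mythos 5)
Your proposal is correct and follows essentially the same route as the paper: the paper proves the complex-case theorem by the chain of equivalences ``isomorphism $\Leftrightarrow$ the system admits roots $\Leftrightarrow$ the point lies in the zero set $\Leftrightarrow$ the parameters lie in the projection,'' and leaves the real case as the evident analogue, which is exactly what you carry out with $Z_\R(\Sf)$ and $\pi^{r+s}_\R$ in place of $V(F,H)$ and $\pi^{r+s}_\C$. Your added remark that the slack variable $z$ works over $\R$ because $\det[\phi]$ is real is a harmless (and correct) point of care beyond what the paper states.
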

		
		We can sum up the procedure by Algorithm \ref{Alg4}.
		
		\begin{algorithm}[h]
			\KwIn{Structure constants $a_{ij}^k \in \R[\cb]$ of $L(\cb)$, $b_{ij}^k \in \R[\db]$ of $L'(\db)$ where $\cb \in \Rs_0 \subset \R^r$, $\db \in \Rs_1 \subset \R^s$, $\Rs_i = Z_\R(F_i, N_i, P_i, H_i)$}
			\KwOut{$\sqcup_i A_i \subset \Rs \co Z_\R(F_0 \cup F_1, N_0 \cup N_1, P_0 \cup P_1, H_0 \cup H_1) \subset \R^{r+s}$ s.t. $L(\cb) \cong L'(\db)$ iff $\cb, \db \in A_i$ for some $i$}
				$R \co \R[z,z_{11}, \ldots, z_{1n}, \ldots, z_{n1}, \ldots, z_{nn},\cb, \db]$\;
				$F \co \left\lbrace \text{Polynomials determining equations of \eqref{sys-problem2.1}} \right\rbrace$\;
				$N \co N_0 \cup N_1$\;
				$P \co P_0 \cup P_1$\;
				$H \co H_0 \cup H_1$\;
				$\Sf \co \left[F, N, P, H\right]$\;
				$\pi_\R^{r+s} \left(Z_\R(\Sf) \right)$; \qquad /$\p (F, N, P, H, r+s, R)$/
			\caption{Testing isomorphism of parametric real Lie algebras}\label{Alg4}
		\end{algorithm}
	\end{enumerate}

\section{Experimentations}\label{sec7}

In this section, we present examples to demonstrate how Algorithms \ref{Alg1}, \ref{Alg2}, \ref{Alg3} and \ref{Alg4} 
can be applied. Let us start with a simple case in dimension 3.

\begin{example}
	Consider  $L = \s \{X_1, X_2, X_3\}$ and $L' = \s \{Y_1, Y_2, Y_3\}$ with
	\[
		\begin{array}{l l}
			[X_1, X_3] = 2X_1 + X_2, & [X_2, X_3] = -X_1 + 2X_2, \\
			\left[Y_1, Y_3\right] = 3Y_1 + 2Y_2, & [Y_2, Y_3] = -Y_1 + Y_2.
		\end{array}
	\]
	Assume that $\phi \colon L \to L'$ is an isomorphism with matrix
	\[
		[\phi] =
			\begin{bmatrix}
				z_{11} & z_{21} & z_{31} \\ z_{12} & z_{22} & z_{32} \\ z_{13} & z_{23} & z_{33}
			\end{bmatrix}.
	\]
	By simple computations, system \eqref{sys-problem1} consists of 9 equations as follows:
	\[
		\begin{cases}
			f_1 \co 1 - z \det [\phi] & = 0 \\
			f_2 \co -z_{13} + 2z_{23} &= 0 \\ 
			f_3 \co 2z_{13} + z_{23} & = 0 \\
			f_4 \co -3z_{11}z_{23} + z_{12}z_{23} + 3z_{13}z_{21} - z_{13}z_{22} & = 0 \\
			f_5 \co -2z_{11}z_{23} - z_{12}z_{23} + 2z_{13}z_{21} + z_{13}z_{22} & = 0 \\
			f_6 \co -3z_{11}z_{33} + z_{12}z_{33} + 3z_{13}z_{31} - z_{13}z_{32} + 2z_{11} + z_{21} & = 0 \\
			f_7 \co -2z_{11}z_{33} - z_{12}z_{33} + 2z_{13}z_{31} + z_{13}z_{32} + 2z_{12} + z_{22} & = 0 \\
			f_8 \co  -3z_{21}z_{33} + z_{22}z_{33} + 3z_{23}z_{31} - z_{23}z_{32} - z_{11} + 2z_{21} & = 0 \\
			f_9 \co-2z_{21}z_{33} - z_{22}z_{33} + 2z_{23}z_{31} + z_{23}z_{32} - z_{12} + 2z_{22} & = 0
		\end{cases}
	\]
	First of all, we put $F \co \left\lbrace f_1, f_2, \ldots, f_9 \right\rbrace$. Afterwards, we set $\Sf \co [F, \emptyset, \emptyset, \emptyset]$ 
	be a SAS in $\R[z, z_{11}, z_{12}, z_{13}, z_{21}, z_{22}, z_{23}, z_{31}, z_{32}, z_{33}]$. 
	Then, $\RT (\Sf, R)$ returns two RSASs as follows:
	\[
		\begin{array}{l l l}
			\begin{cases}
				\left(2z_{21}^2 - 2z_{22}z_{21} + z_{22}^2\right) z - 1 & = 0 \\
				z_{11} + z_{21} - z_{22} & = 0 \\
				z_{12} + 2z_{21} - z_{22} & =0 \\
				z_{13} & = 0 \\
				z_{23} & = 0 \\
				z_{33} - 1 & = 0 \\
				z_{22} & \neq 0
			\end{cases}, &&
			\begin{cases}
				2z_{21}^2 z - 1 & = 0 \\
				z_{11} + z_{21} & = 0 \\
				z_{12} + 2z_{21} & = 0 \\
				z_{13} & = 0 \\
				z_{22} & = 0 \\
				z_{23} & = 0 \\
				z_{33} - 1 &= 0 \\
				z_{21} & \neq 0
			\end{cases}
		\end{array}
	\]
	Since the output is non-empty, $L$ and $L'$ are isomorphic over $\R$. So are they over $\C$. 
	From these RSASs, an isomorphism $\phi \colon L \to L'$ can be easily constructed, namely, we have
	\[
		[\phi] =
			\begin{bmatrix}
			-1 & 1 & 0 \\ 
			-2 & 0 & 0\\ 
			0 & 0 & 1
		\end{bmatrix}.
	\]
\end{example}

\begin{example}
	Consider two 4-dimensional real Lie algebras given in \cite{Mub63a}:
	\[
		\begin{array}{l l l l}
			g_{4,8} \colon &[e_2, e_3] = e_1, & [e_2, e_4] = e_2, & [e_3, e_4] = -e_3; \\
			g_{4,9} \colon & [e_2, e_3] = e_1, & [e_2, e_4] = -e_3, & [e_3, e_4] = e_2.
		\end{array}
	\]	
	System \eqref{sys-problem1} determining an isomorphism $\phi \colon g_{4,8} \to g_{4,9}$ consists of 23 equations $f_1 = \cdots = f_{22} = 0$. We first put $F \co \{f_1, \ldots, f_{22}\}$ and then set $\Sf \co [F, \emptyset, \emptyset, \emptyset]$ be a SAS in $R \co \R[z, z_{11}, \ldots, z_{14}, \ldots, z_{41}, \ldots, z_{44}]$. Then, $\RT(\Sf, R)$ returns $\emptyset$, i.e., $Z_\R(\Sf) = \emptyset$, and thus $g_{4,8} \ncong g_{4,9}$ over $\R$. Note that if we consider $g_{4,8}$ and $g_{4,9}$ over $\C$ then the polynomial ring is $R \co \C[z, z_{11}, \ldots, z_{14}, \ldots, z_{41}, \ldots, z_{44}]$. In this case, $\T(F, R)$ returns one regular chain which reduces to the following system:
	\[
		\begin{cases}
				4z_{23}^2z_{33}^2z_{44}z + 1 & = 0 \\
				z_{11} - 2z_{23}z_{33}z_{44} & = 0 \\
				z_{12}, z_{13}, z_{14} & = 0 \\
				z_{21} + (z_{42} - z_{43}z_{44})z_{23} & = 0 \\
				z_{22} - z_{23}z_{44} & = 0\\
				z_{24} & = 0 \\
				z_{31} - \left(z_{42} + z_{43}z_{44}\right)z_{33} & = 0 \\ 
				z_{32} + z_{33}z_{44} & = 0  \\
				z_{34} & = 0 \\
				z_{44}^2 + 1 & = 0 \\
				4z_{23}^2z_{33}^2z_{44} & \neq 0
		\end{cases}
	\]
	and thus $g_{4,8} \cong g_{4,9}$ over $\C$. Solving this system gives us an isomorphism as follows:
	\[
		[\phi] =
		\begin{bmatrix}
			2i & 0 & 0 & 0 \\ 
			0 & i & -i & 0 \\ 
			0 & 1 & 1 & 0 \\ 
			0 & 0 & 0 & i
		\end{bmatrix}; \quad \left(\text{$i$ is the imaginary unit}\right).
	\]	
\end{example}

\begin{example}
	In this example, we will optimize parameters $\beta$ and $\gamma$ of the 5-dimensional real Lie algebras $g_{5,9}^{\beta\gamma}$
	given in \cite{Mub63b}:
	\[
		\begin{array}{l l l l l}
			[e_1, e_5] = e_1, & [e_2, e_5] = e_1 + e_2, & [e_3, e_5] = \beta e_3, & [e_4, e_5] = \gamma e_4 & \left(\beta \gamma \neq 0\right).
		\end{array}
	\]
	Our objective is equivalent to find out the conditions of two pairs of real numbers $(\beta, \gamma)$ and $(\delta, \sigma)$ for which $g_{5,9}^{\beta\gamma}$ and $g_{5,9}^{\delta\sigma}$ are isomorphic. Now, system \eqref{sys-problem2.1} consists of 45 equations $f_1 = \cdots = f_{45} = 0$. First,  we put $F \co \{f_1, \ldots, f_{45}\}$, $N = P \co \emptyset$ and $H \co \{\beta, \gamma, \delta, \sigma\}$. Next, we set $\Sf \co [F, N, P, H]$ be a SAS in $R \co \R[z, z_{11}, \ldots, z_{1n}, \ldots, z_{51}, \ldots, z_{55}, \beta, \gamma, \delta, \sigma]$. Then, $\p(F, N, P, H, 4, R)$ returns two RSASs as follows:
	\[
		\begin{array}{l l l}
			\begin{cases}
				\beta - \delta = 0 \\ \gamma - \sigma = 0 \\ \delta \neq 0 \textbf{ and } \sigma \neq 0
			\end{cases}, &
			\begin{cases}
				\beta - \sigma = 0 \\ \gamma - \delta = 0 \\ \delta \neq 0 \textbf{ and } \sigma \neq 0.
			\end{cases}
		\end{array}
	\]
	The first RSAS is trivial since two Lie algebras coincide. The second one indicates that two pairs $(\beta, \gamma)$ and $(\gamma, \beta)$ are equivalent, i.e., $g_{5,9}^{\beta\gamma} \cong g_{5,9}^{\gamma\beta}$. Since $(\beta, \gamma)$ and $(\gamma, \beta)$ are symmetric over the line $y=x$ in the punctured $Oxy$-plane (see Figure \ref{fig-g59}),
	\begin{figure}[!h]
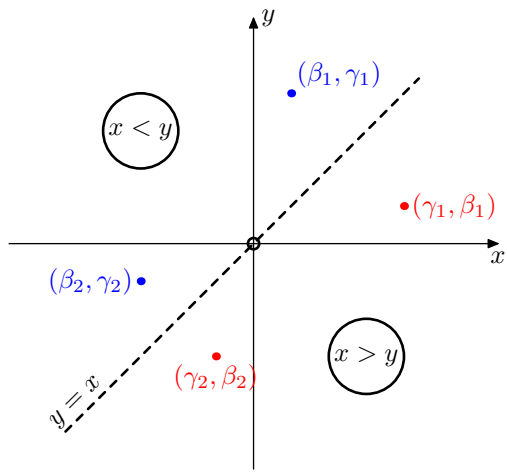

		\centering \parbox{10cm}{\convertMPtoPDF{g59.1}{1}{1}}
		\caption{Pairs of real numbers $(\beta, \gamma)$ and $(\gamma, \beta)$.}\label{fig-g59}
	\end{figure}
	we can choose the pair $(\beta, \gamma)$ below the line $y=x$ including this line except the origin. To sum up, the desired optimal conditions for parameters of $g_{5,9}^{\beta\gamma}$ is $\beta \geq \gamma$ and $\beta\gamma \neq 0$.
\end{example}

\begin{example}
	Consider two 6-dimensional Lie algebras with basis $\{e_1, \dotsc, e_6\}$ as follows:
	\[
		\begin{array}{l l l l l l}
			L_{6,8} \colon & [e_1, e_2] = e_3+e_5, & [e_1, e_3] = e_4, & [e_2, e_5] = e_6, \\
			L_{6,10}^c \colon & [e_1, e_2] = e_3, & [e_1, e_3] = e_5, & [e_1, e_4] = e_6,& [e_2, e_4] = e_5, & [e_2, e_3] = ce_6,
		\end{array}
	\]
	where $c$ is a non-zero parameter. These are two families of 6-dimensional nilpotent Lie algebras over a field of characteristic zero which were classified by Morozov \cite[Section 2]{Mor58}. In this example, we reexamine Morozov's results, i.e., we want to check if $L_{6,8}$ and $L_{6,10}^c$ belong to two non-isomorphic classes. To this end, we will find out all values of parameter $c$ for which $L_{6,8} \cong L_{6,10}^c$.
	
	In this case, both two systems \eqref{sys-problem2} and \ref{sys-problem2.1} consist of 55 equations $f_1 = \cdots = f_{55} = 0$. Set $F \co \{f_1, \ldots, f_{55}\}$, $N = P \co \emptyset$ and $H \co \{c\}$.
	\begin{itemize}
		\item Over $\R$, we set $R \co \R[z, z_{11}, \ldots, z_{1n}, \ldots, z_{61}, \ldots, z_{66}, c]$. $\p(F, N, P, H, 1, R)$ returns one RSAS which is $c>0$. This means that all real Lie algebras $L_{6,10}^c$ with $c>0$ are isomorphic to $L_{6,8}$.
		
		\item Over $\C$, we set $R \co \C[z, z_{11}, \ldots, z_{1n}, \ldots, z_{61}, \ldots, z_{66}, c]$. $\p(F, H, 1, R)$ returns one constructible set which is $c \neq 0$. This means that all complex Lie algebras $L_{6,10}^c$ are isomorphic to $L_{6,8}$.
	 \end{itemize}
	 To sum up, Morozov's classification of 6-dimensional Lie algebras over a field of characteristic zero is redundant, and we can refine his results appropriately.
\end{example}

\bibliographystyle{amsplain}

\end{document}